\newcommand\shorttitle{On the rank and crank statistics for integer partitions}
\newcommand\authors{Nian Hong Zhou}
\ifodd\value{page}
\authors
\shorttitle
\newtheorem{theorem}{Theorem}[section]
\newtheorem{lemma}{Lemma}[section]
\newtheorem{corollary}[theorem]{Corollary}
\newtheorem{proposition}[lemma]{Proposition}
\theoremstyle{remark}
\newtheorem{remark}[lemma]{Remark}
\def\nb{\mathbb N}
\def\zb{\mathbb Z}
\def\cb{{\mathbb C}}
\def\rrw{\rightarrow}
\numberwithin{equation}{section}
\title{\large \bf ON THE DISTRIBUTION OF RANK AND CRANK STATISTICS FOR INTEGER PARTITIONS}
\author{\small NIAN HONG ZHOU}
\date{} 
\begin{document}
\maketitle
\begin{abstract}
Let $k$ be a positive integer and $m$ be an integer. Garvan's $k$-rank $N_k(m,n)$ is the number of partitions of $n$ into at least $(k-1)$ successive Durfee squares with $k$-rank equal to $m$. In this paper give some asymptotics for $N_k(m,n)$ with $|m|\ge \sqrt{n}$ as $n\rightarrow \infty$. As a corollary,
we give a more complete answer for the Dyson's crank distribution conjecture. We also establish some asymptotic formulas for finite differences of $N_k(m,n)$ with respect to $m$ with $m\gg \sqrt{n}\log n$.
\end{abstract}

\maketitle

\section{Introduction and statement of results}
A partition of an integer $n$ is a sequence of non-increasing positive integers whose sum equals $n$. As usual, let $p(n)$ be the number of integer partitions of integer $n$. It is well known that the generating function for $p(n)$ is given by
$$\sum_{n\in\zb}p(n)q^n=\frac{1}{(q;q)_{\infty}},$$
where $(a;q)_{\infty}=\prod_{j\ge 0}(1-aq^j)$ for any $a\in\cb$ and $|q|<1$.

In 1944, F.J. Dyson  \cite{MR3077150} introduced the rank statistic for integer partitions. The rank of a partition was introduced to explain Ramanujan's famous partition congruences with modulus $5$ and $7$. If $N(m, n)$ denotes the number of partitions of $n$ with rank $m$, then it is well known that
$$
\sum_{n\ge 0}N(m,n)q^n=\frac{1}{(q;q)_{\infty}}\sum_{n\ge 1}(-1)^{n-1}q^{n(3n-1)/2+|m|n}(1-q^n).
$$
However the rank fails to explain Ramanujan¡¯s congruence modulo $11$.  Therefore Dyson conjectured the existence of another statistic which he called the ¡°crank¡± which would explain Ramanujan¡¯s partition congruence modulo $11$. The crank was found by G.E. Andrews  and F.G. Garvan \cite{MR929094, MR920146}. Let $M(m,n)$ be the number of partitions of $n$ with crank $m$, then
$$
\sum_{n\ge 0}M(m,n)q^n=\frac{1}{(q;q)_{\infty}}\sum_{n\ge 1}(-1)^{n-1}q^{n(n-1)/2+|m|n}(1-q^n).
$$

In \cite{MR1001259}, Dyson gave the following asymptotic formulae conjecture for the crank statistic for integer partitions:
\begin{equation}\label{dysc}
M\left( m,n \right)\sim \frac{\pi}{4\sqrt{6n}} {\rm sech}^2 \left( \frac{\pi m}{2\sqrt{6n}}   \right) p(n), n\rrw +\infty.
\end{equation}
He then asked the question about the precise range of $m$ in \eqref{dysc}, which holds and about the error term.

The case of fixed $m$ of \eqref{dysc} has been investigated by R. Mao \cite{MR3103192}, K. Bringmann and J. Manschot \cite{MR3210725}, and B. Kim, E. Kim and J. Seo \cite{MR3279269}. In \cite{MR3451872}, K. Bringmann and J. Dousse proved
\begin{theorem}\label{mnth}\footnote{In \cite{MR3451872}, the $O$-term of \eqref{dysc} is $O\left(|m|^{1/3}n^{-1/4}\right)$, the case of $m=0$ was missed.}\label{dysm}For $|m|\le \frac{1}{\pi}\sqrt{\frac{n}{6}}\log n$, we have as $n$ tends to infinity,
\begin{equation}\label{dysc1}
M\left( m,n \right)=\frac{\pi}{4\sqrt{6n}} {\rm sech}^2 \left( \frac{\pi m}{2\sqrt{6n}}   \right) p(n)\left(1+O\left(\frac{|m|^{1/3}+1}{n^{1/4}}\right)\right).
\end{equation}
\end{theorem}

Let $N_k(m, n)$ be the number of partitions of $n$ into at least $(k-1)$ successive Durfee squares with $k$-rank equal to $m\in\zb$ and $k\in\zb_+$. F.G. Garvan \cite{MR1291125} proved
\begin{equation}\label{fk}
\sum_{n\ge 0}N_k(m,n)q^n=\frac{1}{(q;q)_{\infty}}\sum_{n\ge 1}(-1)^{n-1}q^{n((2k-1)n-1)/2+|m|n}(1-q^n).
\end{equation}
It is clear that $M(m,n)=N_1(m,n)$ and $N(m,n)=N_2(m,n)$.

In  \cite{MR3337213}, J. Dousse and M.H. Mertens proved that the same asymptotic formula \eqref{dysc1} holds for $N(m,n)$. D. Parry and  R.C. Rhoades \cite{MR3565363} proved that
\begin{theorem}\footnote{It is need to note that the statement of the main result \cite[Theorem 1.2]{MR3565363} has some miss. Here is the corrected version.}
For $\sqrt{n}\ll |m|= o(n^{3/4})$ with $n\rrw \infty$, one has
\begin{equation}\label{dr}
N_k(m,n)\sim \frac{\pi}{\sqrt{6n}}\left(e^{\frac{\pi (m+k)}{2\sqrt{6n}}}+e^{-\frac{\pi (m+k)}{2\sqrt{6n}}}\right)^{-2}p(n).
\end{equation}
\end{theorem}

In this paper, we shall let $k$ be a fixed positive integer and focus on the asymptotics of Garvan $k$-rank $N_k(m,n)$ with $|m|\ge \sqrt{n}$ and $n$ tends to infinity. Our main result as follows.

\begin{theorem}\label{main}Let $F_k(m,n)=p(n-|m|-k+1)-p(n-|m|-k)$. We have:
\begin{enumerate}
  \item If $m\ge (n+3)/2-2k$ then
$$N_k(m,n)=F_k(m,n).$$
  \item If $\sqrt{n}\le |m|<n/2$ then
  $$N_k(m,n)=F_k(m,n)\left(1+O\left(e^{-\frac{\pi |m|}{\sqrt{6n}}}+\sqrt{n}e^{-n^{1/3}}\right)\right).$$
\end{enumerate}
In particular,
\[N_k(m,n)=\frac{\pi}{\sqrt{6}}\frac{p(n-|m|)}{\sqrt{n-|m|}}\left(1+O\left(\frac{1}{\sqrt{n-|m|}}+e^{-\frac{\pi |m|}{\sqrt{6n}}}\right)\right)\]
holds for all $m$ such that $|m|\ge \sqrt{n}$ and $n-|m|\rrw +\infty$.
\end{theorem}
From the asymptotics for $M(m,n)$ and $N(m,n)$, that is Theorem \ref{mnth} for $M(m,n)$ and $N(m,n)$,
and above Theorem \ref{main}, we have the following corollary.
\begin{corollary}\label{cor1} Let $k=1, 2$ and $m,n\in\zb$. We have as $n-|m|\rrw+\infty$,
\begin{equation}
\frac{N_k(m,n)}{\widetilde{M}(m,n)}-1\ll \frac{1}{\sqrt{n-|m|}}+\min\left(e^{-\frac{\pi |m|}{\sqrt{6n}}}, \frac{|m|^{1/3}+1}{n^{1/4}}\right),
\end{equation}
where
$$\widetilde{M}(m,n)=\frac{\pi}{\sqrt{6}}\left(1+e^{-\frac{\pi|m|}{\sqrt{6n}}}\right)^{-2}\frac{p(n-|m|)}{\sqrt{n-|m|}}.$$
In particular, as $n-|m|\rrw+\infty$,
$$N_k(m,n)\sim \widetilde{M}(m,n).$$
\end{corollary}
From Theorem \ref{main} and Corollary \ref{cor1}, we shall give a more complete answer for the Dyson's crank distribution conjecture.
\begin{theorem}\label{zn1} Dyson's asymptotic formulae \eqref{dysc} is valid if and only if $|m|=o(n^{3/4})$ as $n$ tends to infinity.  More precisely,
\begin{equation*}
\frac{M\left(m,n\right)}{p(n)}=\frac{\pi}{4\sqrt{6n}} {\rm sech}^2 \left( \frac{\pi m}{2\sqrt{6n}}   \right) \left(1+O\left(\min\left(e^{-\frac{\pi |m|}{\sqrt{6n}}}, \frac{|m|^{1/3}+1}{n^{1/4}}\right)+\frac{m^2}{n^{3/2}}\right)\right).
\end{equation*}
\end{theorem}
\begin{remark}
The error term of Theorem \ref{zn1} smaller than that
in Theorem \ref{dysc} when $|m|> \sqrt{n}\left(\log n-4\log\log n\right)/(\pi\sqrt{6})$ as $n\rrw \infty$.
\end{remark}
As another application of Theorem \ref{main}, by a straightforward calculation, we obtain the following asymptotic result for finite differences of $N_k(n,m)$ with respect to $m$ with $m\gg \sqrt{n}\log n$.
\begin{corollary}\label{eqic}Let $m,n, r, k\in\nb$ and $r, k$ be fixed. If $m\ge c\sqrt{n}\log n$ with $c>\frac{(r+1)\sqrt{6}}{2\pi}$ be fixed and $n-m\rrw \infty$, then
\begin{equation}\label{eqi}
\sum_{j=0}^{r}(-1)^j\binom{r}{j}N_{k}(m+j,n)\sim \left(\frac{\pi}{\sqrt{6(n-m)}}\right)^{r+1}p(n-m).
\end{equation}
In particular,
\begin{equation*}
N_{k}(m,n)-N_k(m+1,n)\sim \frac{\pi^2}{\sqrt{6(n-m)}}p(n-m)
\end{equation*}
holds for $m\ge c_1\sqrt{n}\log n$ with $c_1>{\sqrt{6}}/{\pi}$ be fixed and $n-m\rrw \infty$.
\end{corollary}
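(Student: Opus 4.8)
The plan is to substitute the asymptotic of Theorem~\ref{main} into the alternating sum, collapse the main term into a single higher‑order finite difference of $p$, and then invoke the asymptotics for such differences (Proposition~\ref{pro2}). Concretely, I would fix $r,k$, write $\ell=n-m\rrw\infty$, and note that for each $0\le j\le r$ we have $m+j\ge m\gg\sqrt n$ while, since $(n+3)/2-2k<n/2$ for $k\in\nb$, the integer $m+j$ falls under (at least) one of the two cases of Theorem~\ref{main}; either way
\[
N_k(m+j,n)=F_k(1;m+j,n)\bigl(1+O_k(\mathcal E)\bigr),\qquad \mathcal E:=e^{-\pi m/\sqrt{6n}}+e^{-\pi\sqrt{n/6}/5},
\]
where I used $e^{-\pi(m+j)/\sqrt{6n}}\le e^{-\pi m/\sqrt{6n}}$ and $F_k(1;m+j,n)=p(\ell-j-k+1)-p(\ell-j-k)$. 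As $p$ is nondecreasing, $0\le F_k(1;m+j,n)\le p(\ell)$; hence forming $\sum_{j=0}^{r}(-1)^j\binom rj N_k(m+j,n)$ produces $\sum_{j=0}^{r}(-1)^j\binom rj F_k(1;m+j,n)$ up to an error $O_{k,r}\bigl(\mathcal E\,p(\ell)\bigr)$.

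Next I would collapse the main term. With $x:=\ell-k+1$ we have $F_k(1;m+j,n)=p(x-j)-p(x-1-j)$, so Pascal's rule (convolving the first difference with the $r$‑th difference) gives
\[
\sum_{j=0}^{r}(-1)^j\binom rj\bigl(p(x-j)-p(x-1-j)\bigr)=\sum_{i=0}^{r+1}(-1)^i\binom{r+1}{i}p(x-i).
\]
Since $x\sim\ell$, hence $p(x)\sim p(\ell)$ and $\sqrt x\sim\sqrt\ell$ as $\ell\rrw\infty$, Proposition~\ref{pro2} applied with $r+1$ in place of $r$ evaluates the right‑hand side as $\bigl(\pi/\sqrt{6\ell}\bigr)^{r+1}p(\ell)\bigl(1+o(1)\bigr)$, which is $\asymp\ell^{-(r+1)/2}p(\ell)$. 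The ``in particular'' assertion is then the case $r=1$, where the admissible constant $\frac{(r+1)\sqrt6}{2\pi}$ specializes to $\frac{\sqrt6}{\pi}$.

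Finally I would verify the error term is of smaller order, and this is the only step where the hypothesis on $c$ is used. Because $\ell\le n$ one has $\ell^{-(r+1)/2}\ge n^{-(r+1)/2}$, while $m\ge c\sqrt n\log n$ gives $e^{-\pi m/\sqrt{6n}}\le n^{-\pi c/\sqrt6}$; the choice $c>\frac{(r+1)\sqrt6}{2\pi}$ makes $\pi c/\sqrt6-(r+1)/2$ a fixed positive number, so $e^{-\pi m/\sqrt{6n}}\,\ell^{(r+1)/2}\le n^{-(\pi c/\sqrt6-(r+1)/2)}\rrw0$, and $e^{-\pi\sqrt{n/6}/5}\,\ell^{(r+1)/2}\le e^{-\pi\sqrt{n/6}/5}n^{(r+1)/2}\rrw0$. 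Thus $\mathcal E=o\bigl(\ell^{-(r+1)/2}\bigr)$, so the error $O_{k,r}(\mathcal E\,p(\ell))$ is negligible against the main term of the second step, which proves \eqref{eqi}.

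The point deserving care — not really an obstacle, but the reason for the precise constant — is exactly this last comparison: the summands $F_k(1;m+j,n)$ each have size of order $p(\ell)$, yet their alternating combination collapses to size of order $\ell^{-(r+1)/2}p(\ell)$, whereas the relative errors $O_k(\mathcal E)$ attached to them do not cancel and merely accumulate to $O(\mathcal E\,p(\ell))$. One therefore needs $\mathcal E\ll\ell^{-(r+1)/2}$, which is precisely $m\gg\sqrt n\log n$ with the implied constant as stated. (If Proposition~\ref{pro2} is recorded only with an error term weaker than the $O(\ell^{-1/2})$ implicitly used above, that single estimate would have to be sharpened, but no other part of the argument changes.)
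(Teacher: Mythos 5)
Your route is the same as the paper's: apply Theorem \ref{main} termwise so that the relative errors accumulate to $O_{k,r}\bigl((e^{-\pi m/\sqrt{6n}}+e^{-\pi\sqrt{n/6}/5})p(n-m)\bigr)$, collapse the alternating sum of the $F_k(1;m+j,n)$ into an $(r+1)$-st backward difference of the partition function, evaluate that difference asymptotically as $\bigl(\pi/\sqrt{6(n-m)}\bigr)^{r+1}p(n-m)$, and finally check that the hypothesis $c>\frac{(r+1)\sqrt6}{2\pi}$ is exactly what makes $n^{(r+1)/2}e^{-\pi m/\sqrt{6n}}\to 0$. These are precisely the paper's steps \eqref{eq14}, \eqref{eq17}--\eqref{eq19} and its concluding estimate, and your Pascal-rule collapse and error balancing are correct.

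The one substantive defect is the input you invoke for the third step. Proposition \ref{pro2} is the decomposition $N_k(m,n)=I_k(m,n)+E_k(m,n)$ with $E_k\ll e^{B\sqrt{3n/5}}$; it has no parameter $r$ and says nothing about differences of $p$, so ``Proposition \ref{pro2} applied with $r+1$ in place of $r$'' is not a step that exists. The fact you actually need, $\Delta_N^{s}p(N)=\bigl(\pi/\sqrt{6N}\bigr)^{s}p(N)\bigl(1+O(N^{-1/2})\bigr)$, is nontrivial: the paper imports it from Odlyzko \cite[Equ.~(1.2)]{MR932523} (its equation \eqref{eq13}). Nor can it be recovered from the paper's own Lemma \ref{lem2}, whose first-order expansion carries an error $O\bigl((1+|x|+|x|^2)\hat p(N)/N\bigr)$; for $s=r+1\ge 2$ this error is already at least the size of the main term $N^{-s/2}p(N)$, so every case of the corollary beyond $r=0$ (including the displayed ``in particular'' case $r=1$) genuinely requires Odlyzko's estimate or an expansion of $\hat p(n+x)/\hat p(n)$ to order $r+1$. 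Your closing parenthesis shows you sensed this, but it is not a matter of sharpening a constant in Proposition \ref{pro2}: the needed statement lives elsewhere. With the citation replaced by \eqref{eq13} (Odlyzko), your proof is complete and essentially coincides with the paper's.
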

\begin{remark}We can improve \eqref{eqi} and give the size of the error term by Theorem \ref{main}.
We note that the monotonicity properties of $N(m,n)$ have been investigated by S.H. Chan and R. Mao \cite{MR3190432}.
\end{remark}
The proof of Theorem \ref{main} is based on the generating function \eqref{fk} of $N_k(m,n)$ and the Hardy-Ramanujan asymptotic formula for $p(n)$. We present the relation between $N_k(m,n)$ and $p(n)$, and some basic properties of $p(n)$ in Section \ref{s2}. In Section \ref{s3}, we prove the main results of this paper.

\paragraph{Acknowledgements.}The author would like to thank the anonymous referees for their very helpful
comments and suggestions. The author also thank Professor Zhi-Guo Liu for his consistent encouragement.

\section{Preliminaries}\label{s2}

\subsection{Basic properties for Garvan $k$-ranks}~

A key ingredient of our asymptotic results is find the connection $N_k(m,n)$ with $p(n)$. Note that $N_k(m,n)=N_k(-m,n)$, we just need consider the case of $m\ge 0$. We prove the following representation for $N_k(m,n)$.
\begin{proposition}\label{pro1}Let $N_k(m,n)$ be defined as \eqref{fk}, integers $n>0$ and $m\ge 0$. We have
$$
N_k(m,n)=\sum_{\ell\ge 1}(-1)^{\ell-1}F_k(\ell;m,n),
$$
where
$$F_k(\ell;m,n)=p\left(n-m\ell-Q_k(\ell)\right)-p\left(n-m\ell-Q_k(\ell)-\ell\right)$$
with $Q_k(\ell)=(k-1/2)\ell^2-\ell/2$.
\end{proposition}
\begin{proof}
For $m\ge 0$, from
\begin{align*}
\sum_{n\ge 0}N_k(m,n)q^n=\frac{1}{(q;q)_{\infty}}\sum_{\ell\ge 1}(-1)^{\ell-1}q^{\ell((2k-1)\ell-1)/2+m\ell}(1-q^{\ell})
\end{align*}
we have
\begin{align*}
\sum_{n\ge 0}N_k(m,n)q^n&=\sum_{\ell\ge 1}\sum_{r\in\zb}(-1)^{\ell-1}p(r)q^{r+\ell(\ell(2k-1)-1)/2+m\ell}(1-q^{\ell})\\
&=\sum_{n\ge 1}q^n\left(\sum_{\substack{\ell\ge 1, r\in\zb\\ \frac{\ell((2k-1)\ell-1)}{2}+m\ell+r=n}}-\sum_{\substack{\ell\ge 1, \ell\in\zb\\ \frac{\ell((2k-1)\ell+1)}{2}+m\ell+r=n}}\right)(-1)^{\ell-1}p(r).
\end{align*}
Then, comparing the coefficients of $q$ on both sides we complete the proof.
\end{proof}
We need the following Hardy--Ramanujan asymptotic result for $p(n)$, which can be found in \cite{MR1575586}.
\begin{lemma}\label{lem1}We have for $n\in\zb_+$,
\begin{equation*}
p(n)=\frac{e^{B\sqrt{n-1/24}}}{4\sqrt{3}(n-1/24)}\left(1-\frac{1}{B\sqrt{n-1/24}}\right)+ O\left(n^{-1}e^{B\sqrt{n}/2}\right),
\end{equation*}
where and throughout this section $B=2\pi/\sqrt{6}$.
\end{lemma}

We also need the following approximation for $p(n+r)$ with $r=O(\sqrt{n})$.

\begin{lemma}\label{lem2}
Let integer $r= O\left(\sqrt{n}\right)$ and $n$ be a positive integer. We have as $n\rrw \infty$,
$$
\frac{p(n+r)-p(n)}{p(n)}=\frac{Br}{\sqrt{n}}+O\left(\frac{1+|r|+|r|^2}{n}\right).
$$
In particular, as $n\rrw \infty$,
$$
\frac{p(n+r)-p(n)}{p(n)}\ll \frac{|r|+1}{\sqrt{n}}.
$$
\end{lemma}
\begin{proof} It is clear that
\begin{align*}
\frac{p(n+r)}{p(n)}&=e^{B\left(\sqrt{n+r-1/24}-\sqrt{n-1/24}\right)}\left(1+O\left(\frac{|r|}{n}\right)\right)\\
&=e^{B\sqrt{n}\left(\frac{r-1/24}{2 n}-\frac{-1/24}{2 n}+O\left(\frac{|r|^2+1}{n^2}\right)\right)}\left(1+O\left(\frac{|r|}{n}\right)\right)\\
&=\left(1+\frac{Br}{2\sqrt{n}}+O\left(\frac{|r|^2+|r|+1}{n}\right)\right)\left(1+O\left(\frac{|r|}{n}\right)\right).
\end{align*}
by generalized binomial theorem, which completes the proof of the lemma.
\end{proof}

\section{The proof of the results of this paper}\label{s3}

We prove Theorem \ref{main} in Subsection \ref{s31}, Corollary \ref{cor1} and Theorem \ref{zn1} in Subsection \ref{s32}, and Corollary \ref{eqic} in the last subsection of this section.
\subsection{The proof of Theorem \ref{main}}\label{s31}~

Let $m\ge 0$ and $n\ge k-1$. From Proposition \ref{pro1}, it is easy to prove that
\begin{equation}\label{cas1}
N_k(m,n)=F_k(1;m,n)=F_k(m,n)
\end{equation}
for $m\ge (n+3)/2-2k$.  For $n/(4k)\le m\le n/2$,
\begin{align*}
E_k(m,n)&:=N_k(m,n)-F_k(m,n)\\
&\ll \sum_{2\le \ell\le n/m}p(n-m\ell)\ll p(n-2m)\ll p(n-m-\lfloor n/(4k)\rfloor)\\
&\ll n^{-1}e^{B\sqrt{n-m-n/(4k)+O(1)}}\ll e^{-n^{1/3}}p(n-m),
\end{align*}
by use Lemma \ref{lem1}, the asymptotics for $p(n)$. For $n^{1/2}\le m\le n/(4k)$,
we note that
$$0\le m\ell+Q_k(\ell)\le \frac{n}{2k}+k\left(\frac{n}{2km}\right)^2+O\left(\frac{n}{m}\right)\le \frac{3}{4k}n+O(\sqrt{n}),$$
if $0\le \ell\le n/(2km)$. Then by Lemma \ref{lem2} we have
\begin{align*}
E_k(m,n)&:=N_k(m,n)-F_k(m,n)\\
=&\sum_{2\le \ell \le n/(2km)}(-1)^{\ell-1}F_k(\ell;m,n)+O\left(
\sum_{\ell>n/(2km)}p(n-\ell m)\right)\\
\ll &\sum_{2\le \ell \le n/(2km)} p(n-m\ell-Q_k(\ell))\frac{\ell+1}{\sqrt{n-m\ell-Q_k(\ell)}}+\frac{n}{m}p(n-\lfloor n/(2k)\rfloor)\\
\ll &\sum_{2\le \ell \le n/(2km)}\frac{\ell}{\sqrt{n}}p(n-m\ell)+\sqrt{n}p(n-\lfloor n/(2k)\rfloor).
\end{align*}
By use the asymptotics for $p(n)$ and $\sqrt{n}\le m\le n/(4 k)$, we further have
\begin{align*}
E_k(m,n)
\ll &\sum_{2\le \ell \le n/(2km)}\frac{\ell}{n^{3/2}}e^{B\sqrt{n-m\ell}}+n^{-1/2}e^{B\sqrt{n-\lfloor n/(2k)\rfloor}}\\
\ll &\frac{e^{B\sqrt{n-m}}}{n^{3/2}}\sum_{2\le \ell \le n/(2m)}\ell e^{B(\sqrt{n-m\ell}-\sqrt{n-m})}+\frac{e^{B\sqrt{n-m+m-n/(2k)+O(1)}}}{\sqrt{n}}\\
\ll &\frac{p(n-m)}{\sqrt{n}}\sum_{2\le \ell\le n/(2m)}\ell e^{-\frac{B(\ell-1)m}{2\sqrt{n}}}+n^{-1/2}e^{B\sqrt{n-m-n/(4k)+O(1)}}\\
\ll &\frac{p(n-m)}{\sqrt{n}}+\frac{e^{B\sqrt{n/2}}}{\sqrt{n}}\ll \left(\frac{e^{-\frac{Bm}{2\sqrt{n}}}}{\sqrt{n}}+e^{-n^{1/3}}\right)p(n-m).
\end{align*}
Further, from Lemma \ref{lem2} we have as $n-|m|\rrw +\infty$,
\begin{align}\label{eq9}
F_k(m,n)&=p(n-(|m|+k)+1)-p(n-(|m|+k))\nonumber\\
&=\frac{B}{2}\frac{p(n-|m|)}{\sqrt{n-|m|}}\left(1+O\left(\frac{1}{\sqrt{n-|m|}}\right)\right).
\end{align}
Therefore, for $\sqrt{n}\ll m<n/2$, from above we find that
\begin{equation}\label{cas4}
\frac{N_k(m,n)-F_k(m,n)}{F_k(m,n)}\ll e^{-\frac{B|m|}{2\sqrt{n}}}+\sqrt{n}e^{-n^{1/3}},
\end{equation}
which completes the proof of Theorem \ref{main} by substitute $B=2\pi/\sqrt{6}$.
\subsection{The proof of Corollary \ref{cor1} and Theorem \ref{zn1}}\label{s32}~
\subsubsection{The proof of Corollary \ref{cor1}}
For $m=o(n^{3/4})$, by Lemma \ref{lem1} it is easy to find that as $n\rrw \infty$,
$$\frac{p(n-|m|)}{\sqrt{n-|m|}}=\frac{p(n)}{\sqrt{n}}e^{-\frac{B|m|}{2\sqrt{n}}}\left(1+O\left(\frac{n+|m|^2}{n^{3/2}}\right)\right)$$
Hence we have for $m=o(n^{3/4})$,
\begin{align}\label{mmm}
\widetilde{M}(m,n)&=\frac{\pi}{\sqrt{6}}\left(1+e^{-\frac{\pi|m|}{\sqrt{6n}}}\right)^{-2}\frac{p(n-|m|)}{\sqrt{n-|m|}}\nonumber\\
&=\frac{\pi}{4\sqrt{6n}}{\rm sech}^2\left(\frac{\pi |m|}{2\sqrt{6n}}\right)p(n)\left(1+O\left(\frac{n+|m|^2}{n^{3/2}}\right)\right).
\end{align}
From Theorem \ref{mnth} for $M(m,n)$ and $N(m,n)$ and above we have for $|m|\le (\sqrt{n}\log n)/(\pi\sqrt{6})$,
\begin{align*}
\widetilde{M}(m,n)=N_k(m,n)\left(1+O\left(\frac{|m|^{1/3}+1}{n^{1/4}}\right)\right).
\end{align*}
For $|m|\gg \sqrt{n}\log n$, we have
\begin{align*}
\widetilde{M}(m,n)&=\left(1+O\left(e^{-\frac{\pi|m|}{\sqrt{6n}}}\right)\right)\frac{\pi}{\sqrt{6}}\frac{p(n-|m|)}{\sqrt{n-|m|}}\\
&=N_k(m,n)\left(1+O\left(\frac{1}{\sqrt{n-|m|}}+e^{-\frac{\pi |m|}{\sqrt{6n}}}\right)\right)
\end{align*}
by Theorem \ref{main}.  From above we complete the proof of Corollary \ref{cor1}.
\subsubsection{The proof of Theorem \ref{zn1}}
It is clear that Theorem \ref{zn1} follows from Theorem \ref{main}, Corollary \ref{cor1}, \eqref{mmm} and the following lemma by substitute $B=2\pi/\sqrt{6}$.
\begin{lemma}\label{dprz} Let $0\le m\le n$. Then as $n\rrw \infty$,
\begin{equation}\label{zzz}
\frac{p(n-m+1)-p(n-m)}{p(n)}\sim \frac{B}{8\sqrt{n}}\mathrm{sech}^2\left(\frac{B m}{4\sqrt{n}}\right)
\end{equation}
if and only if $1/m=o(n^{-1/2})$ and $m=o(n^{3/4})$.
\end{lemma}
\begin{proof} For $n\ge m\ge 8 n/9$, we have
\[\frac{p(n-m+1)-p(n-m)}{p(n)}\ll \frac{e^{B \sqrt{n/9}}}{n^{-1}e^{B\sqrt{n}}}= ne^{-\frac{2B}{3}\sqrt{n}}\]
by Lemma \ref{lem1} and
\[\frac{B}{8\sqrt{n}}\mathrm{sech}^2\left(\frac{B m}{4\sqrt{n}}\right)\gg n^{-1}e^{-\frac{Bm}{2\sqrt{n}}}\gg n^{-1}e^{-\frac{B}{2}\sqrt{n}}\]
by the definition of $\mathrm{sech}(x)$. Therefore, if \eqref{zzz} is valid then $m< 8n/9$. If $0\le m\le 8n/9$ then
\begin{equation}\label{eq10}
\frac{p(n-m+1)-p(n-m)}{p(n)}=\left(1+O\left(\frac{1}{\sqrt{n}}\right)\right)\frac{Be^{-\frac{Bm}{\sqrt{n-m}+\sqrt{n}}}}{2\sqrt{n}}\left(1-\frac{m}{n}\right)^{-3/2}
\end{equation}
by Lemma \ref{lem1} and \eqref{eq9}, and note that
\begin{equation}\label{eq11}
\frac{B}{8\sqrt{n}}\mathrm{sech}^2\left(\frac{B m}{4\sqrt{n}}\right)=\frac{B}{2\sqrt{n}}e^{-\frac{Bm}{2\sqrt{n}}}\frac{1}{\left(1+e^{-\frac{B m}{8\sqrt{n}}}\right)^2}.
\end{equation}
Hence clearly, if \eqref{zzz} is valid then $m=o(n)$.  Let $0\le m=o(n)$, from \eqref{eq10} we have
\begin{equation}\label{eq12}
\frac{p(n-m+1)-p(n-m)}{p(n)}=\left(1+O\left(\frac{1}{\sqrt{n}}\right)\right)\frac{B}{2\sqrt{n}}e^{-\frac{Bm}{2\sqrt{n}}-\frac{m^2}{8n^{3/2}}(1+o(1))}.
\end{equation}
by generalized binomial theorem. Combining \eqref{eq11} and \eqref{eq12} we immediately obtain that \eqref{zzz} is valid if and only if $1/m=o(n^{-1/2})$ and $m=o(n^{3/4})$.
\end{proof}
\subsection{The proof of the Corollary \ref{eqic}}~

First of all, let $m\ge \sqrt{n}$ and $n-m$ tends to infinity. We have
\begin{align}\label{eq14}
N_k(m,n)&=F_k(m,n)\left(1+O\left(e^{-\frac{\pi m}{\sqrt{6n}}}+e^{-n^{1/3}}\right)\right)\nonumber\\
&=p(n-(m+k)+1)-p(n-(m+k))+ R_k(m,n)
\end{align}
with
\[R_k(m,n)\ll \left(e^{-\frac{\pi m}{\sqrt{6n}}}+\sqrt{n}e^{-n^{1/3}}\right)p(n-m)\]
by Theorem \ref{main}. Now, from \cite[Equ. (1.2)]{MR932523} we have for $r\in\zb_{\ge 0}$ be fixed and as $N\rrw \infty$
\begin{equation}\label{eq13}
\Delta_N^{r} p(N)=\left(\frac{\pi}{\sqrt{6N}}\right)^{r}p(N)\left(1+O\left(\frac{1}{\sqrt{N}}\right)\right),
\end{equation}
where for a function $f(x)$,
\[\Delta_x^0f(x):=f(x),\;\; \Delta_x f(x):=f(x)-f(x-1)\]
and
\[\Delta_x^{\ell}p(x):=\Delta_x(\Delta_x^{\ell-1}p(x))~\mbox{for}~\ell\in\nb \]
is the backward difference. Hence clearly,
\begin{equation}\label{eq17}
(-1)^r\Delta_m^{r} N_k(m+r,n)=\sum_{j=0}^{r}(-1)^j\binom{r}{j}N_{k}(m+j,n).
\end{equation}
By \eqref{eq14} we obtain that
\begin{align}\label{eq18}
\Delta_m^{r} N_k(m+r,n)=&-\Delta_m^{r+1} p(n-((m+r)+k))\nonumber\\
&+O_{k,r}\left(\left(e^{-\frac{\pi m}{\sqrt{6n}}}+e^{-\frac{\pi \sqrt{n/6}}{5}}\right)\sum_{j=0}^{r+1}p(n-m+j)\right).
\end{align}
Moreover, we have
\begin{equation}\label{eq19}
\Delta_m^{r+1} p(n-((m+r)+k))=(-1)^{r+1}\Delta_j^{r+1}p(j)\bigg|_{j=n-m-k+1}.
\end{equation}
Using \eqref{eq13}--\eqref{eq19} and Lemma \ref{lem1}, it is clear that
\begin{align*}
\left(\frac{\sqrt{6(n-m)}}{\pi}\right)^{r+1}\frac{(-1)^r\Delta_m^{r} N_k(m+r,n)}{p(n-m)}-1&\ll_{k, r} \frac{1}{\sqrt{n-m}}+\frac{(n-m)^{\frac{r+1}{2}}}{e^{\frac{\pi m}{\sqrt{6n}}}}\\
&\ll \frac{1}{\sqrt{n-m}}+ n^{\frac{r+1}{2}}e^{-\frac{\pi m}{\sqrt{6n}}}
\end{align*}
and the proof of the corollary follows.

\bigskip
\noindent
{\sc School of Mathematical Sciences,  East China Normal University\\
500 Dongchuan Road, Shanghai 200241, PR China}\newline
\href{mailto:nianhongzhou@outlook.com}{\small nianhongzhou@outlook.com}

\end{document}